\documentclass[a4paper, 12pt]{article}

\usepackage{amsfonts}
\usepackage{amssymb}
\usepackage{amsthm}
\usepackage{amssymb}
\usepackage[all]{xy}
\usepackage{graphicx}
\usepackage{amsmath, amssymb}
\usepackage{gensymb}

\usepackage{hyperref}
\usepackage{caption}
\usepackage{subcaption}

\newcommand{\poq}{\frac{p}{q}}
\newcommand{\andd}{\text{ and }}

\theoremstyle{thmstyleone}
\newtheorem{theorem}{Theorem}

\newtheorem{lemma}[theorem]{Lemma}

\newtheorem{definition}{Definition}

\usepackage{authblk}

\begin{document}

\title{No-Three-in-a-$\Theta:$ Variations on the No-Three-in-a-Line Problem}

\author[1]{Natalie Dodson}

\author[2]{Anant Godbole}

\author[3]{Dashleen Gonzalez}

\author[4]{Ryan Lynch}

\author[5]{Lani Southern}

\affil[1]{Middlebury College}
\affil[2]{East Tennessee State University}
\affil[3]{University of Puerto Rico Mayag\"uez}
\affil[4]{University of Minnesota, Twin Cities}
\affil[5]{Willamette University}
\date{}

\maketitle


\begin{abstract}
We pose a natural generalization to the well-studied and difficult no-three-in-a-line problem: How many points can be chosen on an $n \times n$  grid such that no three of them form an angle of $\theta$? In this paper, we classify which angles yield nontrivial problems, noting that some angles appear in surprising configurations on the grid. We prove a lower bound of $2n$ points for angles $\theta$ such that $135 \degree \leq \theta < 180 \degree$, and further explore the case $\theta = 135\degree$, utilizing geometric properties of the grid to prove an upper bound of $3n - 2$ points. Lastly, we generalize the proof strategy used in proving the upper bound for $\theta = 135\degree$ to provide a general upper bound for all angles.
\end{abstract}

\maketitle

\section{Introduction}\label{secIntro}

The no-three-in-a-line problem is an open problem first proposed by Henry Dudeney in the early 20th century. It asks, what is the maximum number of points that can be placed on an $n \times n$ grid such that no three of them lie in a line? \cite{HALL1975336} This problem is equivalent to the $\theta = 180\degree$ case. Directly applying the pigeonhole principle on the rows of the grid gives an upper bound of $2n$ for the no-three-in-a-line-problem. However, it is unknown whether this upper bound is tight. This uncertainty is not due to lack of effort from many mathematicians. Hall, Jackson, Sudbery, and Wild proved a lower bound of $(\frac{3}{2}- \epsilon)n$ for all $n$ \cite{HALL1975336}. Guy and Kelly also provided a probabilistic conjecture for an upper bound of about $1.874n$, which was later corrected to $1.814n$ by Gabor Ellmann
\cite{guy_kelly_1968}.

Gossell and Johnson solved a related problem, determining that the maximum number of points that lie on an $n \times n$ grid such that no three of them lie in an ``L," or an angle of $90\degree$ is $2n-2$ \cite{GossellJohnson}. An alternate proof for the ``L" case was also presented by Godbole and LaFollette \cite{godbole_lafollette_2019}.

We ask and begin to answer a generalization of these problems: How many points can be placed on an $n\times n$ grid such that no three of them form an angle of $\theta$ ? In Section \ref{secInterestingAngles}, we classify which choices of $\theta$ yield interesting problems. In Section \ref{secLowerBound}, we prove a lower bound for interesting angles between $135 \degree$ and $180 \degree$. In Section \ref{secUpperBound135}, we find an upper bound for the $135\degree$ case, relying on geometric properties specific to $\theta = 135\degree$. We generalize to all interesting angles in Section \ref{secGeneralUpperBound} and obtain upper bounds for all angles that can appear on the grid.

\section{Interesting Angles}\label{secInterestingAngles}

In this section, we determine which choices of $\theta$ yield interesting problems. We begin with the following definitions:

\begin{definition}
    A \emph{grid} $G_n$ is a set of $n^2$ points arranged with $n$ points in each row and column for some natural number $n$. We denote points in $G_n$ by their corresponding integer coordinates in $\mathbb{R}^2,$ starting with $(1, 1)$ for the bottom left hand corner.  
\end{definition}

\begin{definition}
    A \emph{construction} $C_n$ is a subset of points of $G_n$. 
\end{definition}

\begin{definition}
    Every point $P \in C_n$ is called a \emph{chosen point}.  
\end{definition}

\begin{definition}
An angle $\theta$ is \emph{interesting} if there exists a construction $C_n$ for some $n \in \mathbb{N}$ such that for some $A, B,$ and $ C \in C_n$, one of the interior angles of triangle $ABC$  is equal to $\theta.$ 
\end{definition}

\begin{theorem}
\label{interestingIFFrational}
An angle $\theta$ is interesting if and only if $\tan(\theta)$ is rational.
\end{theorem}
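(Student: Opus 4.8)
The plan is to translate the geometric condition into linear algebra over the integers, exploiting the fact that every chosen point has integer coordinates. Fix a triangle $ABC$ and suppose the angle $\theta$ occurs at the vertex $B$. Setting $\vec{u} = A - B$ and $\vec{v} = C - B$, both vectors have integer components, say $\vec{u} = (a,b)$ and $\vec{v} = (c,d)$ with $a,b,c,d \in \mathbb{Z}$. The interior angle $\theta \in (0\degree, 180\degree)$ between them is determined by the dot product $\vec{u}\cdot\vec{v} = ac+bd$ and the two-dimensional cross product $\vec{u}\times\vec{v} = ad-bc$, via $\cos\theta = (ac+bd)/(|\vec{u}||\vec{v}|)$ and $\sin\theta = |ad-bc|/(|\vec{u}||\vec{v}|)$, the latter being nonnegative since $\theta \in (0\degree,180\degree)$.

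For the forward direction, dividing these two expressions gives
$$\tan\theta = \frac{\sin\theta}{\cos\theta} = \frac{|ad-bc|}{ac+bd},$$
a ratio of two integers, hence rational whenever it is defined. The only case in which it is undefined is $ac+bd = 0$, i.e.\ $\theta = 90\degree$, which I would treat separately (it is plainly realizable, e.g.\ by $(1,1),(2,1),(1,2)$). This establishes that every interesting angle has rational tangent.

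For the reverse direction, suppose $\tan\theta = p/q$ with $p,q \in \mathbb{Z}$. I would exhibit an explicit triple of lattice points realizing $\theta$. Place the vertex at the origin and take $\vec{u} = (1,0)$ along the positive $x$-axis; then choose $\vec{v} = (q,p)$, fixing the signs of $q$ and $p$ so that $\vec{v}$ lies in the first quadrant when $\theta$ is acute and in the second quadrant when $\theta$ is obtuse. The angle between $(1,0)$ and $(q,p)$ is then exactly $\theta$ rather than its supplement, because the sign of the $x$-coordinate of $\vec{v}$ controls the sign of $\cos\theta$. The three points $(0,0)$, $(1,0)$, $(q,p)$ are non-collinear with integer coordinates, so translating them into the positive quadrant and embedding them in $G_n$ for sufficiently large $n$ yields the required construction.

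The forward direction is a routine computation once the determinant/dot-product formula is in hand, so the delicate points are all bookkeeping. The hard part will be the reverse direction: the crux is verifying that the chosen quadrant placement of $\vec{v}$ produces $\theta$ and not $180\degree - \theta$. One must also handle outside the main argument the undefined case $\theta = 90\degree$ and the degenerate straight angle $\theta = 180\degree$ (relevant since the no-three-in-a-line problem is precisely the $180\degree$ instance), where $\tan\theta$ is rational but the three points are collinear. These edge cases are where I expect the statement to require the most careful phrasing.
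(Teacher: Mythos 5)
Your proposal is correct and takes essentially the same route as the paper: for one direction the paper likewise computes $\tan\theta = \frac{\|\overrightarrow{BA}\times\overrightarrow{BC}\|}{\overrightarrow{BA}\cdot\overrightarrow{BC}}$ as a ratio of integers, and for the other it places the vertex on the grid with arms $(1,0)$ and $(q,p)$, using the same sign convention (choosing $q<0$ when $\frac{p}{q}<0$) that you describe to obtain $\theta$ rather than its supplement. If anything, your explicit flagging of the $90\degree$ case (where the tangent is undefined yet the angle is realizable) and the degenerate $180\degree$ case is more careful than the paper, which passes over both in silence.
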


\begin{proof}

\begin{figure}[ht]
    \centering
    \includegraphics[width=4.5cm]{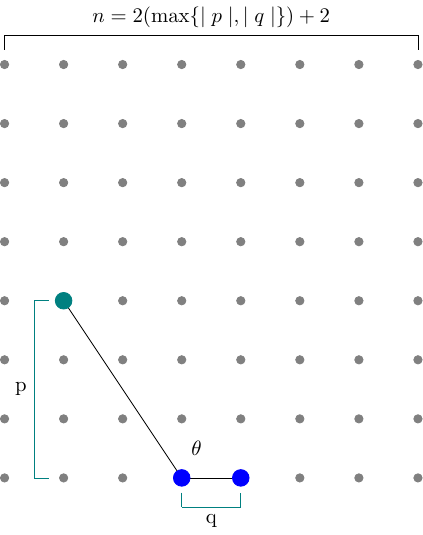}
    \caption{A construction of $\theta$ for $\theta=\arctan(\frac{-3}{2}).$}
    \label{fig:construction}
\end{figure}

Let $\theta$ be an arbitrary angle such that $\tan(\theta)=\frac{p}{q}$ for some integers $p$ and $q.$ If $\frac{p}{q}<0,$ choose $q$ to be negative. Let $n=2 (\text{max}\{ \lvert P \rvert, \lvert q \rvert \})+2.$ Take the points $(\frac{n}{2}+1,1),$ $(\frac{n}{2}+2,1),$ and $(\frac{n}{2}+1+q,1+p)$ in $C_n.$ These points form an angle of $\theta,$ as shown in Figure \ref{fig:construction}, so $\theta$ is interesting.

Let $\theta$ be an arbitrary interesting angle. Then for some $C_n$, there exists $A = (A_x, A_y), B = (B_x, B_y), C = (C_x, C_y) \in C_n$ such that $\angle ABC = \theta$. 

The magnitude of the cross product is given by $\| \overrightarrow{BA} \times \overrightarrow{BC} \| = \| \overrightarrow{BA} \| \|\overrightarrow{BC} \| \sin{\theta}.$ The cosine of the angle is given by $\cos{\theta}= \frac{\overrightarrow{BA} \cdot \overrightarrow{BC}}{\| \overrightarrow{BA} \| \|\overrightarrow{BC} \|}.$ Thus

\begin{align}
    \tan{\theta} & =\frac{\sin{\theta}}{\cos{\theta}} \nonumber\\
    &= \left( \frac{\| \overrightarrow{BA} \times \overrightarrow{BC} \|}{\| \overrightarrow{BA} \| \|\overrightarrow{BC} \|} \right) \left( \frac{\| \overrightarrow{BA} \| \|\overrightarrow{BC} \|}{\overrightarrow{BA} \cdot \overrightarrow{BC}} \right) \nonumber\\
    &=\frac{\| \overrightarrow{BA} \times \overrightarrow{BC} \|}{\overrightarrow{BA} \cdot \overrightarrow{BC}} \nonumber\\
    &= \frac{(A_x-B_x)(C_y-B_y)-(C_x-B_x)(A_y-B_y)}{(A_x-B_x)(C_x-B_x)+(A_y-B_y)(C_y-B_y)}, \nonumber 
\end{align}

 which is rational since $A,$ $B,$ and $C$ have integer coordinates. 
\end{proof}

\section{Lower Bound for \texorpdfstring{$135\degree \leq \theta < 180 \degree$}{135 degree leq theta < 180 degree}}\label{secLowerBound}

\begin{definition}
    A construction $C_n$ is \emph{peaceful} for $\theta$ if no three points in $C_n$ form an angle of $\theta$.
\end{definition}

\begin{theorem}
\label{lowerBound}
For $135 \degree \leq \theta < 180 \degree,$ there exists a peaceful $C_n$ with $\lvert C_n \rvert=2n$.
\end{theorem}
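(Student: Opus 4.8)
The plan is to exhibit a single explicit construction of $2n$ points that avoids \emph{every} angle in the range $[135\degree, 180\degree)$ at once, which is more than enough to prove the claim for each fixed $\theta$ in that range. I would take the two full horizontal rows at the bottom and top of the grid,
\[
C_n = \{(i,1) : 1 \le i \le n\} \cup \{(i,n) : 1 \le i \le n\},
\]
so that $\lvert C_n \rvert = 2n$. The whole argument then reduces to bounding the interior angles of the triangles determined by triples of chosen points.

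Next I would stratify the triples. Any three chosen points either lie in a single row, in which case they are collinear and the relevant angle is exactly $180\degree$ (excluded from $[135\degree,180\degree)$), or they split as two points in one row and one in the other. By the reflection symmetry $(x,y) \mapsto (x, n+1-y)$, which maps $C_n$ to itself, it suffices to treat the case of two bottom points $A = (a,1)$, $B = (b,1)$ and one top point $C = (c,n)$. So the core of the proof is to show that none of $\angle A, \angle B, \angle C$ lands in $[135\degree,180\degree)$.

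For the apex angle $\angle C$, I would write $\overrightarrow{CA} = (a-c, -(n-1))$ and $\overrightarrow{CB} = (b-c,-(n-1))$ and examine the numerator of $\cos\angle C$, namely $(a-c)(b-c) + (n-1)^2$. Since $\lvert a-c\rvert, \lvert b-c\rvert \le n-1$, this is at least $-(n-1)^2 + (n-1)^2 = 0$, so $\cos \angle C \ge 0$ and $\angle C \le 90\degree$. For a base angle, say $\angle A$, I would use $\overrightarrow{AB} = (b-a,0)$ and $\overrightarrow{AC} = (c-a, n-1)$ to get
\[
\cos\angle A = \operatorname{sgn}(b-a)\cdot \frac{c-a}{\sqrt{(c-a)^2 + (n-1)^2}}.
\]
The absolute value of the fraction is at most $\tfrac{1}{\sqrt2}$, with equality only when $\lvert c-a\rvert = n-1$, i.e. $\{a,c\} = \{1,n\}$; a quick case check shows that in those two boundary situations the sign forces $\cos\angle A = +\tfrac1{\sqrt2}$ (giving $\angle A = 45\degree$), while otherwise $\lvert\cos\angle A\rvert < \tfrac1{\sqrt2}$. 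Either way $\cos\angle A > -\tfrac1{\sqrt2}$, so $\angle A < 135\degree$, and symmetrically $\angle B < 135\degree$.

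The main obstacle is exactly this borderline bookkeeping for the base angle: the construction flirts with a $135\degree$ angle precisely at the corners, and I must confirm that the orientation of $\overrightarrow{AB}$ can never cooperate to realize it, since a third base point on the far side of the corner would have to leave the grid. Once that is pinned down, every interior angle of every triple is either at most $90\degree$ or exactly $180\degree$, so no angle in $[135\degree, 180\degree)$ occurs; hence $C_n$ is peaceful for each such $\theta$ and has $2n$ points.
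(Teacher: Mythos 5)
Your proposal is correct, and it rests on the same skeleton as the paper's proof: the identical construction (the full bottom and top rows), the same trichotomy of triples (all three collinear in one row; vertex opposite the other two points; vertex sharing a row with one of the other two), and the same two conclusions (apex angle $\leq 90\degree$, base angle $< 135\degree$). Where you genuinely differ is in how those angle bounds are verified. The paper reduces each case to a claimed worst-case configuration --- e.g.\ ``the largest angle is constructed if $A=(1,n)$, $B=(B_x,1)$, $C=(n,n)$'' --- and then argues synthetically: an obtuse-triangle/longest-side contradiction via the Pythagorean theorem for the apex case, and a supplementary-angle argument with $\tan(\theta_1) = \frac{n-1}{n-B_x} > 1$ for the base case. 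Notably, the paper never justifies why those configurations are extremal. Your route sidesteps that entirely: you compute the cosine of each angle for an \emph{arbitrary} triple via dot products, showing the apex numerator $(a-c)(b-c)+(n-1)^2 \geq 0$ and bounding $\lvert\cos \angle A\rvert \leq \tfrac{1}{\sqrt{2}}$ with a sign analysis showing equality with the wrong sign is impossible because the third base point would need to lie off the grid. This costs a bit more algebra but yields a fully self-contained argument with no unproved extremality claim, which is arguably a rigor improvement over the published version; the paper's version, in exchange, is shorter per case and more geometrically transparent. Both proofs establish peacefulness for every $\theta \in [135\degree, 180\degree)$ simultaneously. (One shared, harmless imprecision: for collinear triples the angle at an endpoint is $0\degree$ rather than $180\degree$, but neither value lies in the forbidden range.)
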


\begin{proof}

\begin{figure}
\centering
\begin{minipage}{.5\textwidth}
  \centering
  \includegraphics[height=4.5cm]{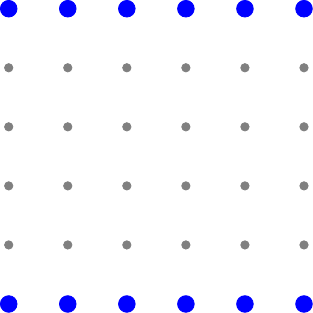}
  \captionof{figure}{Lower Bound Construction $135 \degree \leq \theta < 180 \degree$ for $G_6$}
  \label{fig: lower bound construction}
\end{minipage}%
\begin{minipage}{.5\textwidth}
  \centering
  \includegraphics[height=4.5cm]{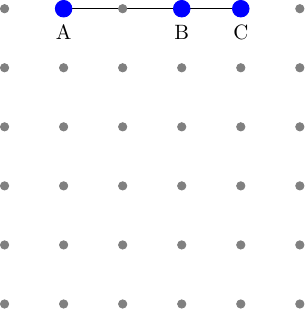} 
  \captionof{figure}{Case 1 of Theorem \ref{lowerBound}}
  \label{fig:lower bound case 1}
\end{minipage}
\end{figure}

Let $\theta$ be an arbitrary angle such that $135 \degree \leq \theta < 180 \degree.$ Let $n \in \mathbb{N}$ be arbitrary. Consider the construction $C_n = \{(x, y) \mid (x, y) \in G_n, y = 1 \text{ or } n\}$, as shown for $G_6$ in Figure \ref{fig: lower bound construction}. Let $A = (A_x, A_y), B = (B_x, B_y),$ and $C = (C_x, C_y) \in C_n$. There are three cases for $\angle {ABC}$: 

\begin{enumerate}
    \item $A_y = B_y = C_y$, as shown for $A_y = n$ in Figure \ref{fig:lower bound case 1}. In this case $\angle {ABC} = 180 \degree$, so $\angle {ABC} \neq \theta.$

    \item Without loss of generality, $A_y = C_y = n$ and $B_y = 1$. An example is shown in Figure \ref{fig: lower bound case 2}.
The largest angle is constructed if $A = (1,n)$, $B= (B_x,1)$ and $C = (n, n)$. By way of contradiction, suppose that $\angle ABC > 90 \degree.$ Then $ABC$ is an obtuse triangle and the side opposite the angle, $\overline{AC}$ must be the longest side of the the triangle. $\overline{AC}$ has length $n-1.$ However, by the Pythagorean theorem, $\overline{AB}$ has length $\sqrt{(n-1)^2+(B_x-1)^2} \geq n-1$ and $\overline{BC}$ has length $\sqrt{(n-1)^2+(n-B_x)^2} \geq n-1$, which is a contradiction. Therefore, $\angle ABC \not > 90 \degree$ and $\angle ABC \neq \theta.$

    \item   $A_y = 1 = B_y$ and $C_y = n.$ An example is shown in Figure \ref{fig: lower bound case 3}. Then the largest angle that can occur comes from taking $A=(1,1),$ $B=(B_x,1)$ and $C=(n,n)$ for some natural number $B_x$ such that $1<B_x\leq n.$ If $B_x = n,$ $\angle ABC=90 \degree \neq \theta.$ If $B_x<n,$ consider the right triangle formed by the points $(B_x,1),$ $(n,1),$ and $(n,n).$ Let $\theta_1$ be the angle inside the triangle with $(B_x,1)$ at its vertex, as shown in Figure \ref{fig: lower bound case 3}. Note that $\angle ABC=180 \degree - \theta_1.$ Since, $1<B_x<n,$ $\tan(\theta_1)=\frac{n-1}{n-B_x}>1$ so $\theta_1>45 \degree.$ Thus $\angle ABC < 180 \degree - 45 \degree = 135 \degree$ and $\angle ABC \neq \theta.$

\begin{figure}[h]
\centering
\begin{minipage}{.5\textwidth}
  \centering
  \includegraphics[height=4.5cm]{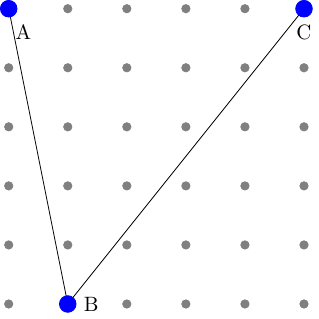}
  \captionof{figure}{Case 2 of Theorem \ref{lowerBound}}
  \label{fig: lower bound case 2}
\end{minipage}%
\begin{minipage}{.5\textwidth}
  \centering
  \includegraphics[height=4.5cm]{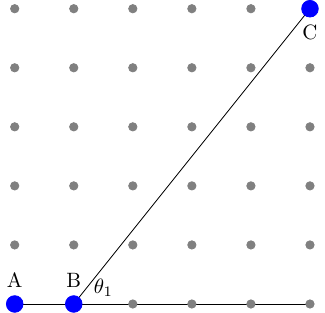} 
  \captionof{figure}{Case 3 of Theorem \ref{lowerBound}}
  \label{fig: lower bound case 3}
\end{minipage}
\end{figure}

\end{enumerate}
\end{proof}

\section{Upper Bound for \texorpdfstring{$\theta=135 \degree$}{theta=135 degree}} \label{secUpperBound135}

\begin{definition}

A \emph{$\frac{p}{q}$-slope-bucket} is a subset of points of $G_n$ that lie on a line of slope $\frac{p}{q}$ for some fixed $\frac{p}{q}.$ When the choice of $\frac{p}{q}$ is clear, we may refer to it simply as a $\emph{slope bucket}$.
\end{definition}

In this section, ``slope buckets" refer to $-1$-slope-buckets. The $-1$ slope buckets of $G_4$ are shown in Figure \ref{fig:buckets}.

\begin{figure}[ht]
    \centering
    \includegraphics[width=4.5cm]{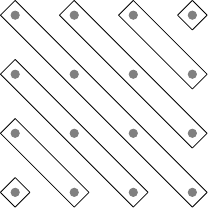}
    \caption{The $-1$-slope-buckets $G_4$}
    \label{fig:buckets}
\end{figure}

\begin{definition}
An \emph{interior point} $A = (A_x, A_y)$ is a point in a construction $C_n$ such that there exists distinct $B = (B_x, B_y)$ and $C =(C_x, C_y) \in C_n$ where either
\begin{itemize}
    \item $A_x = B_x = C_x$ and $B_y < A_y < C_y$, or
    \item $A_y = B_y = C_y$ and $B_x < A_x < C_x$, or
    \item $A, B,$ and $C$ are in the same slope bucket and $B_x < A_x < C_x$. 
\end{itemize}  
\end{definition}

\begin{figure}[h]
    \centering
\includegraphics[width=4.5cm]{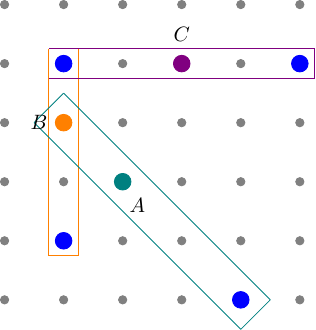}
    \caption {$A, B$ and $C$ are interior points with respect to its $-1$-slope bucket, its column, and its row, respectively.}
    \label{fig:interior points}
\end{figure}

\begin{lemma}
\label{same column}
Suppose there exist two chosen points $A,B$ with $A_y>B_y$ in the same column, $k$. The two cases enumerated below are shown in Figure \ref{fig: same column lemma}.
\begin{enumerate}
    \item If another point, $C,$ is chosen in the slope bucket of $A$ with $C_x<k,$ $\angle CAB=135 \degree.$
    \item If another point, $D$, is chosen in the slope bucket of $B$ with $D_x > k$, $\angle ABD = 135\degree$.
\end{enumerate}

\end{lemma}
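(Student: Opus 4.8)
The plan is to establish Case 1 by a direct vector computation and then deduce Case 2 from it by a symmetry argument, so that no second computation is needed. For Case 1, I would set $A = (k, A_y)$ and $B = (k, B_y)$ with $A_y > B_y$, so that $\overrightarrow{AB} = (0,\, B_y - A_y)$ points straight down. Since $C$ lies in the slope bucket of $A$, it satisfies $C_y - A_y = -(C_x - k)$, and the hypothesis $C_x < k$ forces $\overrightarrow{AC} = (C_x - k,\; k - C_x)$ to be a positive multiple of $(-1, 1)$, i.e. a vector pointing up and to the left. I would then apply the tangent formula from the proof of Theorem \ref{interestingIFFrational}, taken with vertex at $A$, namely $\tan(\angle CAB) = \frac{\| \overrightarrow{AC} \times \overrightarrow{AB} \|}{\overrightarrow{AC} \cdot \overrightarrow{AB}}$. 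Writing $d = k - C_x > 0$ and $h = A_y - B_y > 0$, the cross product has magnitude $dh$ while the dot product equals $-dh$, so $\tan(\angle CAB) = -1$. Because the dot product is negative, the angle is obtuse, and the unique angle in $(90\degree, 180\degree)$ with tangent $-1$ is $135\degree$; hence $\angle CAB = 135\degree$.

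Since the lemma is essentially a fixed-configuration computation, the only genuinely subtle point to flag is orientation. A line of slope $-1$ meets a vertical line at an angle of $45\degree$, so a priori the angle at $A$ could be either $45\degree$ or $135\degree$. The inequality $C_x < k$ is exactly what guarantees that $\overrightarrow{AC}$ points into the upper-left quadrant relative to $A$ while $\overrightarrow{AB}$ points downward, forcing the obtuse configuration. This is why the \emph{sign} of the dot product — equivalently, pinning down which side of the column $C$ sits on — is the crux, rather than the magnitude computation itself; without the hypothesis $C_x < k$ the conclusion would fail.

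For Case 2, rather than repeat the computation I would invoke the half-turn $\rho(x, y) = (2k - x,\; A_y + B_y - y)$ about the midpoint of $\overline{AB}$. As a point reflection, $\rho$ is an isometry that maps every line to a parallel line; it swaps $A$ and $B$ and carries the half-plane $x < k$ to the half-plane $x > k$. In particular it sends the slope bucket of $A$ (the slope $-1$ line through $A$) to the slope $-1$ line through $\rho(A) = B$, which is precisely the slope bucket of $B$. Hence any $D$ in $B$'s slope bucket with $D_x > k$ is the image $\rho(C)$ of some $C$ in $A$'s slope bucket with $C_x < k$, and since $\rho$ preserves angle measure and sends $\angle CAB$ to $\angle ABD$, Case 1 immediately gives $\angle ABD = 135\degree$.
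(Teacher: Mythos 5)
Your proof is correct, and for Case 1 it is essentially the paper's own argument: the paper forms $\vec{AC}$ and $\vec{AB}$ and evaluates $\arccos \frac{\vec{AC}\cdot\vec{AB}}{\|\vec{AC}\|\,\|\vec{AB}\|} = \arccos\frac{-1}{\sqrt{2}} = 135\degree$, whereas you run the tangent formula from Theorem \ref{interestingIFFrational} and then disambiguate $45\degree$ versus $135\degree$ using the sign of the dot product. The paper's choice of cosine makes that disambiguation unnecessary, since $\arccos$ is injective on $[0\degree,180\degree]$ and the value $-\frac{1}{\sqrt{2}}$ pins down the angle in one step; your orientation discussion is the price of working with $\tan$, though you pay it correctly. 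Where you genuinely diverge is Case 2: the paper simply repeats the computation with $\vec{BD}$ and $\vec{BA}$, while you deduce it from Case 1 via the half-turn $\rho$ about the midpoint of $\overline{AB}$. That is a legitimate and more economical argument, with one caveat worth stating explicitly: the point $\rho(D)$ to which you apply Case 1 has integer coordinates but need not lie in $G_n$, let alone be a chosen point of $C_n$, so you are implicitly using the (true but unstated) fact that the Case 1 computation depends only on the relative positions of the three points, not on their membership in the grid or the construction. Once that remark is made, the reduction is airtight, and it is very much in the spirit of the paper itself, which proves Lemma \ref{same bucket} not by fresh computation but by relabeling its configurations onto those of Lemmas \ref{same column} and \ref{same row}.
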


\begin{figure}
    \centering
\includegraphics[width=4.5cm]{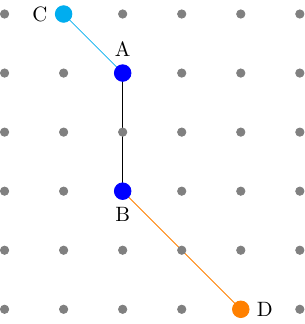}
    \caption{Lemma \ref{same column} $\angle CAB$ and $\angle ABD$}
    \label{fig: same column lemma}
\end{figure}

\begin{proof}~
\begin{enumerate}
    \item Since $A$ and $C$ are in the same $-1$-slope-bucket, $C_x-A_x=-(C_y-A_y)$. The vectors between points are given by $\vec{AC}= \langle C_x-A_x,C_y-A_y \rangle$ and $\vec{AB}=\langle B_x-A_x,B_y-A_y \rangle= \langle 0,B_y-A_y \rangle$. Then $\angle{CAB}$ is given by
    \begin{align*}
        \angle{CAB} 
        &=\arccos{\frac{\vec{AC} \cdot \vec{AB}}{\| \vec{AC} \| \| \vec{AB} \|}} \\
        &=\arccos{\frac{-1}{\sqrt{2}}}
        = 135 \degree.
    \end{align*}
    \item Since $B$ and $D$ are in the same $-1$-slope-bucket, $D_x-B_x=-(D_y-B_y)$. The vectors between the points are given by $\vec{BD}= \langle D_x-B_x,D_y-B_y \rangle $ and $\vec{BA}= \langle A_x-B_x,A_y-B_y \rangle = \langle 0,A_y-B_y \rangle.$ Then $\angle{ABD}$ is given by
    \begin{align*}
        \angle{ABD} 
        &=\arccos{\frac{\vec{BD} \cdot \vec{BA}}{\| \vec{BD} \| \| \vec{BA} \|}} \\
        &= \arccos{\frac{-1}{\sqrt{2}}}
        = 135 \degree.
    \end{align*}
\end{enumerate} 
\end{proof}

\begin{lemma}
\label{same row}
Suppose there exist two chosen points $A,B$ with $A_x>B_x$ in the same row, $k$. The two cases enumerated below are shown in Figure \ref{fig:same row lemma}.
\begin{enumerate}
    \item If another point, $C,$ is chosen in the slope bucket of $A$ with $C_y<k,$ $\angle CAB=135 \degree.$
    \item If another point, $D,$ is chosen in the slope bucket of $B$ with $D_y > k,$ $\angle DBA=135 \degree.$
\end{enumerate}
\end{lemma}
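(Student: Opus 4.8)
The plan is to mirror the proof of Lemma~\ref{same column}, interchanging the roles of the horizontal and vertical directions. Because $A$ and $B$ lie in the same row $k$, we have $A_y = B_y = k$, so the vectors $\vec{AB}$ and $\vec{BA}$ are purely horizontal. As in the same-column case, I would compute each target angle directly from the dot-product formula $\angle = \arccos\frac{\vec{u} \cdot \vec{v}}{\|\vec{u}\|\,\|\vec{v}\|}$ and show that this cosine equals $-\tfrac{1}{\sqrt{2}}$, which forces the angle to be $135\degree$.

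For part (1), the point $C$ lies in the $-1$-slope bucket of $A$, so $C_x - A_x = -(C_y - A_y)$; writing $\vec{AC} = \langle C_x - A_x, C_y - A_y\rangle$, its two components are negatives of one another, whence $\|\vec{AC}\| = \sqrt{2}\,\lvert C_x - A_x\rvert$. Since $\vec{AB} = \langle B_x - A_x, 0\rangle$, the dot product $\vec{AC}\cdot\vec{AB} = (C_x - A_x)(B_x - A_x)$ receives no contribution from the second coordinate, and the cosine simplifies to $\pm\tfrac{1}{\sqrt{2}}$. The remaining task is to fix the sign, and this is where the hypotheses do their work: $A_x > B_x$ gives $B_x - A_x < 0$, while $C_y < k = A_y$ together with the slope-$(-1)$ relation gives $C_x - A_x > 0$, so the product is negative and the cosine is exactly $-\tfrac{1}{\sqrt{2}}$.

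Part (2) is symmetric: $D$ lies in the slope bucket of $B$, so $\vec{BD}$ again has components that are negatives of each other, and $\vec{BA} = \langle A_x - B_x, 0\rangle$ is horizontal. Here $A_x > B_x$ makes the first coordinate of $\vec{BA}$ positive, while $D_y > k = B_y$ forces $D_x - B_x < 0$, so the dot product is again negative and $\angle DBA = 135\degree$. I do not expect any genuine obstacle, since the computation is routine once the vectors are written down. The one point demanding care is the orientation bookkeeping in the previous step: confusing ``below and to the right'' with ``above and to the left'' would flip the sign of the cosine and spuriously yield $45\degree$, so the hypotheses $C_y < k$ and $D_y > k$ must be invoked to pin down which way each slope-bucket vector points.
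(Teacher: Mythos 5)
Your proposal is correct and takes essentially the same approach as the paper: the paper's own proof simply states that the cases are analogous to Lemma~\ref{same column}, and your argument carries out exactly that analogy, swapping the roles of rows and columns in the dot-product computation. Your explicit sign bookkeeping (using $C_y < k$ and $D_y > k$ to force the cosine to be $-\tfrac{1}{\sqrt{2}}$ rather than $+\tfrac{1}{\sqrt{2}}$) is a faithful, slightly more detailed rendering of the same calculation used for Lemma~\ref{same column}.
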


\begin{figure}[h]
    \centering
    \includegraphics[width=4.5cm]{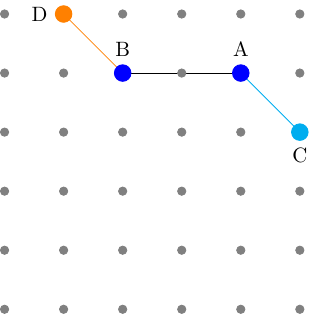}
    \caption{Lemma \ref{same row} $\angle CAB$ and $\angle DBA$}
    \label{fig:same row lemma}
\end{figure}

\begin{proof}
    These cases are analogous to the cases Lemma \ref{same column}.
\end{proof}

\begin{lemma}
\label{same bucket}
Suppose there exist two chosen points $A,B$ with $A_y>B_y$ in the same slope bucket, $k$. There are four cases:
\begin{enumerate}
    \item If another point, $C,$ is chosen in the column of $A$ with $A_y < C_y,$ $\angle CAB=135 \degree.$ 
    \item If another point, $D,$ is chosen in the column of $B$ with $ B_y > D_y,$ $\angle ABD=135 \degree.$
    \item If another point, $E,$ is chosen in the row of $A$ with $A_x > E_x,$ $\angle EAB=135 \degree$.
    \item If another point, $F,$ is chosen in the row of $B$ with $B_x < F_x,$ $\angle ABF=135 \degree.$
\end{enumerate}
The four cases below are shown in Figure \ref{fig:same bucket lemma}.
\end{lemma}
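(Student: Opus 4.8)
The plan is to prove Lemma~\ref{same bucket} exactly as the preceding two lemmas (Lemma~\ref{same column} and Lemma~\ref{same row}) were proved: via the dot-product/arccosine computation. In fact, the key observation is that this lemma is the symmetric counterpart to the previous two. In Lemma~\ref{same column} and Lemma~\ref{same row}, the two "anchor" points $A,B$ share a column (respectively a row), and the third point lies in a slope bucket. Here the roles are swapped: $A,B$ share a slope bucket, and each of the four third points ($C,D,E,F$) lies in a column or row of $A$ or $B$. So I expect the proof to either reduce to the earlier lemmas by symmetry, or to redo the short vector computation in each of the four cases.

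For a representative case, say Case~1, I would set up the two vectors at the vertex $A$. Since $A$ and $B$ lie in the same $-1$-slope bucket, we have $B_x - A_x = -(B_y - A_y)$, so $\vec{AB} = \langle B_x - A_x,\, B_y - A_y\rangle$ is a nonzero multiple of $\langle 1, -1\rangle$ (pointing down-and-right, since $A_y > B_y$). Since $C$ is in the column of $A$ with $A_y < C_y$, we have $\vec{AC} = \langle 0,\, C_y - A_y\rangle$, a positive multiple of $\langle 0,1\rangle$. Then
\begin{align*}
\angle CAB &= \arccos\frac{\vec{AC}\cdot\vec{AB}}{\|\vec{AC}\|\,\|\vec{AB}\|} \\
&= \arccos\frac{-1}{\sqrt{2}} = 135\degree.
\end{align*}
The remaining three cases are handled identically, each time computing the cosine of the relevant angle between a horizontal/vertical vector and the $\pm\langle 1,-1\rangle$ slope-bucket vector; in every case the normalized dot product evaluates to $-1/\sqrt{2}$, yielding $135\degree$. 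Because the configurations differ only by reflection (swapping the roles of $A$ and $B$, or of rows and columns), I would most likely write out Case~1 in full and then assert that Cases~2--4 are analogous, mirroring how Lemma~\ref{same row} was dispatched by reference to Lemma~\ref{same column}.

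The only point requiring care—and the one I would flag as the main (albeit minor) obstacle—is getting the signs and orientations right so that the dot product comes out negative rather than positive. The hypotheses ($A_y < C_y$ in Case~1, $B_y > D_y$ in Case~2, $A_x > E_x$ in Case~3, $B_x < F_x$ in Case~4) are precisely the conditions that force the third point to sit on the correct side, so that the angle is the obtuse $135\degree$ rather than the complementary $45\degree$. I would double-check each case against Figure~\ref{fig:same bucket lemma} to confirm that the stated inequality places the third point so the two vectors meet at $135\degree$, since a flipped inequality would give $45\degree$ instead. No deeper difficulty is expected: the entire lemma rests on the fact that a vertical or horizontal unit vector makes a $45\degree$ angle with the $-1$-slope direction, so the supplementary configuration yields $135\degree$.
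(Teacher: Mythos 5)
Your proposal is correct and takes essentially the same approach as the paper: the paper proves this lemma by reducing each of the four cases to Lemma~\ref{same column} or Lemma~\ref{same row} via a relabeling of the points, which is exactly one of the two routes you outline, and the dot-product computation you write out for Case~1 (giving $\arccos(-1/\sqrt{2}) = 135\degree$) is precisely the computation underlying those earlier lemmas. Your sign checks in all four cases are accurate, so either packaging---direct computation or reduction by symmetry---constitutes a complete proof.
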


\begin{figure}
    \centering
    \includegraphics[width=4.5cm]{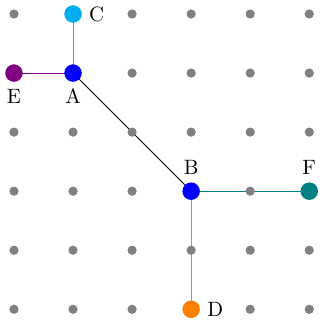}
    \caption{Lemma \ref{same bucket} $\angle CAB, \angle DBA, \angle EAB,$ and $\angle ABF$}
    \label{fig:same bucket lemma}
\end{figure}

\begin{proof}
\begin{enumerate}
    \item Consider a function from Lemma \ref{same bucket} to Lemma \ref{same column} defined by 
        \begin{align*}
            C &\rightarrow A \\
            A &\rightarrow B \\
            B &\rightarrow D. 
        \end{align*}By Part 2 of Lemma \ref{same column}, $\angle CAB=135 \degree.$
    \item From Lemma \ref{same bucket} to Lemma \ref{same column}, map
        \begin{align*}
            A &\rightarrow C \\
            B &\rightarrow A \\
            D &\rightarrow B.
        \end{align*}By Part 1 of Lemma \ref{same column}, $\angle ABD=135 \degree$.
    \item From Lemma \ref{same bucket} to Lemma \ref{same row}, map
        \begin{align*}
            E &\rightarrow B \\
            A &\rightarrow A \\
            B &\rightarrow C \\
        \end{align*}
        From Part $1$ of Lemma \ref{same row}, $\angle EAB=135 \degree$.
    \item From Lemma \ref{same bucket} to  Lemma \ref{same row}, map
        \begin{align*}
            A &\rightarrow D \\
            B &\rightarrow B \\
            F &\rightarrow A
        \end{align*}
         By Part $2$ of Lemma \ref{same row}, $\angle ABF=135 \degree$.
    \end{enumerate}
\end{proof}

\begin{lemma} 
\label{outside columns} 
Let $n \in \mathbb{N}$ be arbitrary. Let $A$ and $B$ be distinct chosen points in an outside column, $k$, of $G_n$ with $A_y > B_y$.

\begin{enumerate}
    \item $k = 1$: If $B$ has a point, $D$, in the same slope bucket, $\angle{ABD} = 135 \degree$.
    \item $k = n$: If $A$ has a point, $C$ in the same slope bucket, $\angle{CAB} = 135 \degree$.
\end{enumerate}
\end{lemma}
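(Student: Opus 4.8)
The plan is to derive both parts directly from Lemma~\ref{same column}, observing that placing $A$ and $B$ in an \emph{outside} column automatically supplies the one hypothesis of that lemma which is not already present here: the side condition on the $x$-coordinate of the third point.

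First I would treat Part~1, where $k = 1$. Since $A$ and $B$ share column $1$ with $A_y > B_y$, the setup of Lemma~\ref{same column} applies verbatim. The point $D$ lies in the slope bucket of $B$ and is distinct from $B$. Because a line of slope $-1$ meets each vertical column in at most one point, $D$ cannot lie in column $1$, so $D_x \neq 1$; and since column $1$ is the leftmost column of $G_n$, every grid point has $x$-coordinate at least $1$, forcing $D_x > 1 = k$. This is precisely the hypothesis $D_x > k$ of Part~2 of Lemma~\ref{same column}, which yields $\angle ABD = 135\degree$.

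Part~2 is the mirror image. With $k = n$, the point $C$ lies in the slope bucket of $A$, is distinct from $A$, and therefore lies in a column other than $n$; since column $n$ is the rightmost column, $C_x < n = k$. This matches the hypothesis $C_x < k$ of Part~1 of Lemma~\ref{same column}, giving $\angle CAB = 135\degree$.

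The only real content is this side-forcing observation, so I do not expect a substantial obstacle; the single point requiring care is the strict inequality $D_x > 1$ (respectively $C_x < n$), which rests on the fact that a $-1$-slope bucket is a line meeting each column exactly once, so the distinctness of $D$ from $B$ (of $C$ from $A$) rules out equal $x$-coordinates. Everything else is inherited directly from Lemma~\ref{same column}.
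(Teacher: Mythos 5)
Your proposal is correct and takes exactly the same route as the paper, which simply cites Case~2 of Lemma~\ref{same column} for Part~1 and Case~1 for Part~2. Your explicit justification that the outside-column position forces the side condition ($D_x > 1$, respectively $C_x < n$, since a $-1$-slope line meets each column in at most one point) is a detail the paper leaves implicit, and it is argued correctly.
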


\begin{figure}
    \centering
    \includegraphics[width=4.5cm]{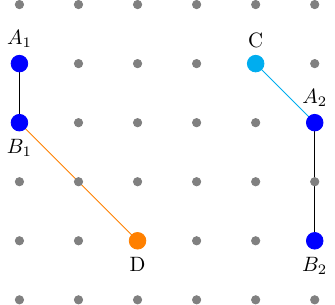}
    \caption{Lemma \ref{outside columns} $\angle ABD$ and $\angle CAB$}
    \label{fig: outside columns lemma}
\end{figure}

\begin{proof}
    Part 1 follows from Lemma \ref{same column} Case 2 and Part 2 follows from Lemma \ref{same column} Case 1. The angles are illustrated in Figure \ref{fig: outside columns lemma}.
\end{proof}

\begin{lemma}
\label{interiorBucket}
A point that is an interior point with respect to its slope bucket and has another chosen point in its row or column is part of a $135 \degree$ angle.
\end{lemma}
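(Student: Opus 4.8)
The plan is to reduce directly to Lemma~\ref{same bucket} by a short orientation-based case analysis. Write $A = (A_x, A_y)$ for the given point. Since $A$ is an interior point with respect to its slope bucket, the third bullet of the definition of interior point supplies two chosen points $P$ and $Q$ in $A$'s slope bucket with $P_x < A_x < Q_x$. Because the buckets here have slope $-1$, this forces $P$ to sit above and to the left of $A$ (so $P_y > A_y$) and $Q$ to sit below and to the right of $A$ (so $Q_y < A_y$). The crucial structural observation is that \emph{both} a bucket-neighbor above-left and a bucket-neighbor below-right of $A$ are guaranteed to exist; this is precisely what lets us match the orientation of whichever row- or column-point we are handed. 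Let $R$ denote the hypothesized chosen point lying in the row or column of $A$ (note $R \neq P, Q$, since $P$ and $Q$ share neither a row nor a column with $A$).

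I would then split into four cases according to the position of $R$ relative to $A$, applying the matching case of Lemma~\ref{same bucket} to the appropriately oriented bucket-pair. If $R$ lies in the column of $A$ above it ($R_x = A_x$, $R_y > A_y$), apply Case~1 of Lemma~\ref{same bucket} to the pair $(A, Q)$, valid since $A_y > Q_y$, obtaining $\angle RAQ = 135\degree$. If $R$ lies in the column below ($R_y < A_y$), apply Case~2 to the pair $(P, A)$, valid since $P_y > A_y$, obtaining $\angle PAR = 135\degree$. Symmetrically, if $R$ lies in the row of $A$ to the left ($R_y = A_y$, $R_x < A_x$), apply Case~3 to $(A, Q)$ to get $\angle RAQ = 135\degree$; and if $R$ lies in the row to the right ($R_x > A_x$), apply Case~4 to $(P, A)$ to get $\angle PAR = 135\degree$. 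In each case $A$ is the vertex of a $135\degree$ angle, so $A$ is part of a $135\degree$ angle, as claimed.

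There is no serious analytic obstacle, since every angle computation is already packaged inside Lemma~\ref{same bucket}; the work is purely in selecting the correct reduction. The one step requiring genuine care is the orientation bookkeeping: one must consistently pair ``$R$ above or to the left'' with the down-right neighbor $Q$ and ``$R$ below or to the right'' with the up-left neighbor $P$, and then confirm that the standing hypothesis $A_y > B_y$ of Lemma~\ref{same bucket} holds for the chosen pair in each of the four cases. Laying the four cases out side by side, as above, makes this routine and guards against mislabeling which of $P$ and $Q$ plays the role of the upper point in the invoked lemma.
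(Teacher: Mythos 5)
Your proposal is correct and takes essentially the same route as the paper: the paper's proof of Lemma \ref{interiorBucket} is precisely a reduction to Lemma \ref{same bucket}, with the case analysis relegated to two figures, whereas you spell out the four orientation cases explicitly and verify the hypothesis $A_y > B_y$ of Lemma \ref{same bucket} in each. Your version is simply a more detailed writeup of the paper's argument, and the bookkeeping (pairing $R$ above or left with $Q$, and $R$ below or right with $P$) checks out in all four cases.
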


\begin{figure}[h]
\centering
\begin{minipage}{.5\textwidth}
  \centering
  \includegraphics[width=4.5cm]{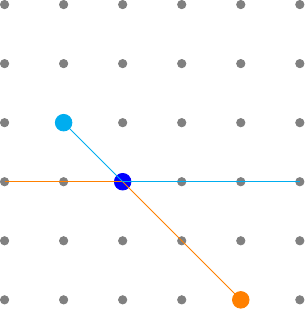}
  \captionof{figure}{Lemma \ref{interiorBucket} - Row}
  \label{fig:additional points in row}
\end{minipage}%
\begin{minipage}{.5\textwidth}
  \centering
  \includegraphics[width=4.5cm]{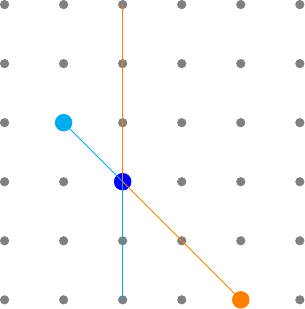} 
  \captionof{figure}{Lemma \ref{interiorBucket} - Column}
  \label{fig:additional points in column}
\end{minipage}
\end{figure}

\begin{proof}
    This lemma follows from Lemma \ref{same bucket}. The two possible cases are illustrated in Figure \ref{fig:additional points in row} and Figure \ref{fig:additional points in column}.
\end{proof}

\begin{lemma}
\label{interiorRow}
A point that is an interior point with respect to its row or column and has another chosen point in its slope bucket is part of a $135 \degree$ angle.
\end{lemma}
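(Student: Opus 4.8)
The plan is to mirror the proof of Lemma~\ref{interiorBucket}, which was deduced from Lemma~\ref{same bucket}; here I instead invoke Lemma~\ref{same column} and Lemma~\ref{same row}. Let $A$ be the interior point in question and let $E$ be the chosen point guaranteed to lie in the slope bucket of $A$. Since $A$ is interior with respect to its row or its column, I split into two top-level cases according to which of these holds. In each case the defining property of an interior point supplies a chosen point on each side of $A$ (one above and one below in its column, or one to the left and one to the right in its row), while the $-1$ slope of the bucket forces $E$ to sit strictly on one side of $A$: either $E_x < A_x$ (equivalently $E_y > A_y$) or $E_x > A_x$ (equivalently $E_y < A_y$). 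The whole argument is then a matter of pairing $E$ with the partner point on the correct side and quoting the appropriate part of the relevant lemma.

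Concretely, suppose first that $A$ is interior with respect to its column, so there are chosen points $B$ and $C$ in column $A_x$ with $B_y < A_y < C_y$. If $E_x < A_x$, I would apply Part~1 of Lemma~\ref{same column} to the pair $(A,B)$, with $A$ as the upper point, to obtain $\angle EAB = 135\degree$. If instead $E_x > A_x$, I would apply Part~2 of Lemma~\ref{same column} to the pair $(C,A)$, with $A$ now the lower point and $E$ lying in its slope bucket, to obtain $\angle CAE = 135\degree$. Either way $A$ is the vertex of a $135\degree$ angle.

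The case where $A$ is interior with respect to its row is entirely analogous, using Lemma~\ref{same row} in place of Lemma~\ref{same column}: there are chosen points $B$ and $C$ in row $A_y$ with $B_x < A_x < C_x$, and according to whether $E_y < A_y$ or $E_y > A_y$ I would apply Part~1 (to the pair with $A$ as the right-hand point and $B$ as its partner) or Part~2 (to the pair with $C$ as the right-hand point and $A$ as the left-hand point), yielding $\angle EAB = 135\degree$ or $\angle EAC = 135\degree$ respectively.

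Since this lemma is essentially the dual of Lemma~\ref{interiorBucket}, I do not anticipate a genuine obstacle. The only point requiring care is the bookkeeping of the relabelings, namely keeping straight which of the two flanking points plays the role of the ``upper''/``lower'' (or ``right''/``left'') point in Lemma~\ref{same column}/Lemma~\ref{same row}, so that $E$ always falls into the slope bucket named in the hypothesis of the part being invoked. Before the case split is legitimate one must also confirm the small observation that the $-1$ slope makes the sign conditions $E_x < A_x$ and $E_y > A_y$ coincide (and likewise on the other side), which guarantees that exactly one of the two subcases applies.
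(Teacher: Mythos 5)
Your proposal is correct and takes essentially the same route as the paper, whose proof of this lemma consists precisely of citing Lemma~\ref{same column} and Lemma~\ref{same row} and displaying the cases in figures. Your write-up simply makes explicit the case split and the relabelings (which flanking point is the ``upper''/``lower'' or ``left''/``right'' partner, and the slope~$-1$ sign equivalence) that the paper leaves to the illustrations.
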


\begin{figure}[h]
\centering
\begin{minipage}{.5\textwidth}
  \centering
  \includegraphics[width=4.5cm]{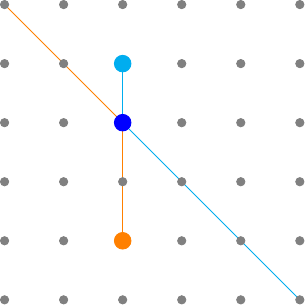}
  \captionof{figure}{Lemma \ref{interiorRow} - Column}
  \label{fig:interior point in column}
\end{minipage}%
\begin{minipage}{.5\textwidth}
  \centering
  \includegraphics[width=4.5cm]{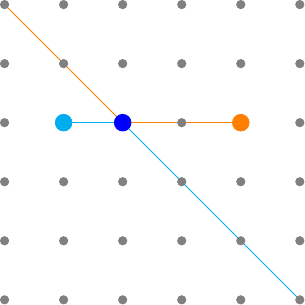}
  \captionof{figure}{Lemma \ref{interiorRow} - Row}
  \label{fig:interior point in row}
\end{minipage}
\end{figure}

\begin{proof}
    This lemma follows from Lemma \ref{same column} and Lemma \ref{same row}. The possible cases are shown in Figure \ref{fig:interior point in column} and Figure \ref{fig:interior point in row}.
\end{proof}

We have now arrived at the two most critical lemmas for proving our upper bound for angles of $135\degree$. 

\begin{lemma}
\label{strategy} Let $n \in \mathbb{N}$. Let $C_n$ be arbitrary such that $C_n$ contains $2n$ points and is peaceful. If $k$ is the number of slope buckets occupied by the points in $C_n$, at most $2n-1-k$ additional points can be added to $C_n$ without creating a $135 \degree$ angle.
\end{lemma}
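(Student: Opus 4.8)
The plan is to recast the statement in terms of slope buckets. Since $G_n$ has exactly $2n-1$ slope buckets (the anti-diagonals $x+y=c$ for $2 \leq c \leq 2n$) and $C_n$ meets $k$ of them, the quantity $2n-1-k$ is precisely the number of buckets left empty by $C_n$. Thus it suffices to bound the set $S$ of added points by the number of empty buckets, and the cleanest route is an injection: I would argue that (a) no added point lies in a bucket already occupied by $C_n$, and (b) no two added points lie in a common bucket. Together (a) and (b) place each added point alone in a distinct previously-empty bucket, giving $\lvert S \rvert \leq 2n-1-k$.

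For the engine I would use the same-bucket lemmas to turn ``two chosen points sharing a bucket'' into a forced $135\degree$ angle as soon as either point has an appropriately placed neighbor in its row or column. Concretely, suppose an added point $P$ shares a bucket with a chosen point $A$, with $A_y > P_y$, so that $A$ is the upper-left and $P$ the lower-right of the pair. By Lemma \ref{same bucket}, peacefulness forces $A$ to be the topmost point of its column and the leftmost point of its row, while $P$ must be the bottommost of its column and the rightmost of its row; otherwise one of the four cases of Lemma \ref{same bucket} produces a $135\degree$ angle. The crucial input is the hypothesis $\lvert C_n \rvert = 2n$: distributing $2n$ points among $n$ columns and $n$ rows by pigeonhole forces enough columns and rows to carry a second point that, in most positions, supplies exactly the neighbor these lemmas require. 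I would use this to rule out case (a) for the interior columns, and Lemma \ref{outside columns} to handle the boundary columns $k=1$ and $k=n$, where a bucket-mate on one side is still guaranteed. An entirely analogous row argument would rule out (b): if two added points shared an empty bucket, the same-bucket lemma applied to that pair, fed by a row- or column-neighbor from $C_n$, again forces a $135\degree$ angle, with Lemma \ref{interiorBucket} and Lemma \ref{interiorRow} covering the cases where the relevant point is interior to its bucket, row, or column.

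The main obstacle I anticipate is that (a) can genuinely fail in isolation: a point can be added to an occupied bucket precisely when its bucket-mate sits at a column-top-and-row-left extreme and the new point sits at the opposite extreme, so a naive ``one fresh bucket per added point'' injection is not literally valid. The honest argument must therefore charge each such extreme addition to an empty bucket whose usability it destroys, and this is exactly where the extreme-position analysis and Lemma \ref{outside columns} carry the weight. I would organize the whole argument as an induction on $\lvert S \rvert$, adding points one at a time and showing that each addition either occupies a fresh empty bucket or, by sitting at an extreme of an occupied bucket, permanently removes a distinct empty bucket from the pool available to later additions; in either case the count of still-available empty buckets drops by at least one, which yields $\lvert S \rvert \leq 2n-1-k$. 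Verifying that every extreme addition really does consume a full empty bucket's worth of future capacity is the delicate step, and it is the part of the proof I expect to demand the most care.
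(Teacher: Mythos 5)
Your proposal does not reach a proof: the step that carries all the weight is explicitly left unverified. You correctly observe that the naive injection fails --- claim (a) is false, since Lemma \ref{same bucket} only forces a pair of bucket-mates to sit at row/column extremes, and a peaceful construction can indeed contain such a pair --- but your repair (an induction in which every ``extreme addition'' is charged to a distinct empty bucket that it ``permanently removes from the pool'') is precisely the content of the lemma, and you never exhibit the charging map or prove that the charges are distinct. Saying that this step ``will demand the most care'' is an accurate diagnosis, not an argument; as written, the proposal is a plan with a hole exactly where the lemma lives.

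The paper closes this hole by shifting attention away from the added point itself, which is the idea your proposal is missing. Since $\lvert C_n \rvert = 2n$ and $G_n$ has only $n$ columns, the pigeonhole principle guarantees that each additional point forces some point of the enlarged construction to become an \emph{interior point of its column}; it does not matter whether that interior point is the new point or an old one. Lemma \ref{interiorRow} then says that any point interior to its column must be the \emph{only} chosen point in its slope bucket, on pain of a $135\degree$ angle. So each addition is charged not to the bucket the new point lands in, but to the singleton bucket of the interior point it creates, and since $G_n$ has $2n-1$ slope buckets of which $k$ are already occupied by $C_n$, at most $2n-1-k$ such charges are available. This is both shorter and more robust than an injection on the added points themselves: it never needs the false claim (a), and it avoids the bookkeeping of your amortized scheme entirely. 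If you want to salvage your approach, the lesson to import is exactly this: the object that must occupy a fresh singleton bucket is the interior-in-column point produced by an addition, not the added point.
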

\begin{proof}
Let $C_n$ be an arbitrary peaceful construction containing $2n$ points for $\theta = 135\degree$. Let $k$ be the number of slope buckets occupied by $C_n$. By the pigeonhole principle, for every additional point added to $C_n$, there exists some point in $C_n$ which is an interior point in its column. By Lemma \ref{interiorRow}, an interior point in its column must be the only point in its slope bucket. Since $G_n$ has $2n-1$ slope buckets and $k$ of them are occupied by $C_n$, at most $2n-1-k$ additional points can be added to $C_n$.
\end{proof}

\begin{lemma}
\label{klemma}
Let $C_n$ be an arbitrary peaceful construction containing $2n$ points for $\theta = 135\degree$. Let $k$ be the number of slope buckets occupied by $C_n$ points. Then
$ k \geq n+1$.
\end{lemma}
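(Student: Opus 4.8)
The plan is to prove the equivalent statement that the total \emph{excess} $2n - k = \sum_{b}(s_b - 1)$, summed over the occupied slope buckets (where $s_b$ denotes the number of chosen points in bucket $b$), is at most $n - 1$. Since $C_n$ has exactly $2n$ points and $\sum_b s_b = 2n$ while $\sum_b 1 = k$, this inequality is exactly equivalent to $k \geq n+1$. The whole argument is a charging scheme that injects the excess into the $n$ columns of $G_n$, with one column left over to produce the decisive $+1$.

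First I would fix an ordering within each slope bucket: list its chosen points $P_1, \ldots, P_{s_b}$ by increasing $x$-coordinate, which is the same as decreasing $y$-coordinate since points in a $-1$-slope-bucket satisfy $x + y = \text{const}$. Call $P_2, \ldots, P_{s_b}$ the \emph{charged} points of the bucket. By construction the number of charged points summed over all buckets is $\sum_b (s_b - 1) = 2n - k$, so it suffices to bound the number of charged points by $n - 1$.

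The core step is to show that every charged point is the bottommost chosen point of its column, so that distinct charged points occupy distinct columns. If $P$ is charged, then it has a bucket-mate $A$ with $A_x < P_x$, and hence $A_y > P_y$. Applying Case 2 of Lemma \ref{same bucket} to the pair $A, P$ shows that any chosen point strictly below $P$ in $P$'s column would create a $135\degree$ angle, contradicting peacefulness. Thus $P$ is the unique column-bottom of its column, and consequently no column can contain two charged points.

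Finally I would observe that no charged point can lie in column $1$: a point with $x = 1$ has the minimum $x$-coordinate among the points of its bucket, so it is $P_1$ and is therefore never charged. Hence the charged points are confined to columns $2, \ldots, n$, at most one per column, giving at most $n - 1$ of them. This yields $2n - k \leq n - 1$ and therefore $k \geq n+1$. I expect the main obstacle to be pinning down the exact constant rather than the mechanics of the charging: the scheme most naturally delivers only $k \geq n$, and the extra unit hinges entirely on the boundary observation that the globally leftmost occupied column can never receive a charge. Some care is also warranted to confirm that the ``at most one charge per column'' claim relies only on peacefulness and the already-established Lemma \ref{same bucket}, with no circular appeal to the upper-bound theorem this lemma is meant to support.
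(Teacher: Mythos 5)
Your proposal is correct, but it is not the paper's argument; it is a genuinely different and leaner route to $k \geq n+1$. The paper splits the occupied buckets into singletons and the multi-point buckets $\mathcal{B}$, introduces $P$ (all chosen points lying in buckets of $\mathcal{B}$) and $M$ (the bucket-interior points of those buckets), and assembles three separate column constraints --- at most two points of $P$ per column (Lemma \ref{interiorRow}), at most one point of $P$ in each exterior column (Lemma \ref{outside columns}), and each point of $M$ alone in its column (Lemma \ref{interiorBucket}) --- into the inequality $\lvert P \rvert \leq 2n - \lvert M \rvert - 2$, which it then combines algebraically with the identities $k = 2n - \lvert P \rvert + \lvert \mathcal{B} \rvert$ and $\lvert M \rvert = \lvert P \rvert - 2\lvert \mathcal{B} \rvert$. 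Your charging scheme bypasses that bookkeeping entirely: charging the $s_b - 1$ non-leftmost points of each bucket and invoking only Case 2 of Lemma \ref{same bucket} (a charged point with a bucket-mate up and to its left can have no chosen point below it in its column, on pain of a $135\degree$ angle), you inject the excess $2n-k$ into columns $2, \dots, n$ with at most one charge per column, which is exactly $2n - k \leq n - 1$. The two proofs also locate the decisive ``$+1$'' differently: the paper extracts it from the exterior-column lemma (the $-2$ in its Inequality \ref{eq7}, halved in Inequality \ref{eq8}), whereas you get it from the observation that column $1$ can never receive a charge because a charged point needs a bucket-mate strictly to its left. What your version buys is economy --- one case of one low-level lemma instead of three derived lemmas plus algebra --- and a transparent injection that makes the constant visible; what the paper's version buys is reusable intermediate inequalities (its Equations \ref{eq2}--\ref{eq7} mirror the structure later generalized in Section \ref{secGeneralUpperBound}). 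Your worry about circularity is unfounded: Lemma \ref{same bucket} is proved before and independently of Lemma \ref{klemma} and Theorem \ref{upperbound135}, so your appeal to it is legitimate.
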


\begin{proof}
Let $C_n$ be an arbitrary peaceful construction containing $2n$ points for $\theta = 135\degree$. Let $k$ be the number of slope buckets that have at least one chosen point. Let $\mathcal{B}$ be the set of slope buckets containing at least $2$ points in $C_n$. Let $P \subseteq C_n$ be the points in $C_n$ belonging to a slope bucket of $\mathcal{B}$. Since $2n- \lvert P \rvert$ is the number of slope buckets containing one point in $C_n$ and $\lvert \mathcal{B} \rvert$ is the number of slope buckets containing $\geq 2$ points in $C_n$, by substitution,

\begin{equation}
   \label{eq2}
    k =2n- \lvert P \rvert +\lvert \mathcal{B} \rvert.
\end{equation}


Let $M$ be the set of interior points in slope buckets of $\mathcal{B}.$ Since each slope bucket contains a finite number of points in $G_n$, there are $2\lvert \mathcal{B} \rvert$ non-interior points in the slope buckets of $\mathcal{B}$. Therefore, 

\begin{equation}
\label{eq4}
    \lvert M \rvert = \lvert P \rvert - 2\lvert \mathcal{B} \rvert
\end{equation}

By Lemma \ref{interiorRow}, each column of the grid can contain at most two points of $P.$ Since there are $n$ columns on the grid, 

\begin{equation}
    \label{eq5}
    \lvert P \rvert \leq 2n.
\end{equation}

By Lemma \ref{outside columns}, the two exterior columns can contain at most $1$ point in a slope bucket of $\mathcal{B}$. Thus, 

 \begin{equation}
     \label{eq6}
     \lvert P \rvert \leq 2n-2.
 \end{equation}

Additionally, by Lemma \ref{interiorBucket}, any point in $M$ is the only point in its column. Therefore, 

    \begin{equation}
        \label{eq7}
        \begin{split}
    \lvert P \rvert &\leq 2n- \lvert M \rvert -2. \text{ Equivalently,}\\
       -\lvert P \rvert &\geq -(2n- \lvert M \rvert -2).  \text{ Adding 2n to both sides,}\\
       2n-\lvert P \rvert &\geq 2n-(2n- \lvert M \rvert -2)\\
       &= \lvert M \rvert +2.
       \end{split}
    \end{equation}

By substitution for $ \lvert M \rvert$ of Equation \ref{eq4} into Inequality \ref{eq7},

\begin{equation}
    \label{eq8}
    \begin{split}
        2n-\lvert P \rvert &\geq \lvert P \rvert-2\lvert \mathcal{B} \rvert +2\\
        2 \lvert \mathcal{B} \rvert &\geq 2\lvert P \rvert-2n+2\\
        \lvert \mathcal{B} \rvert &\geq \lvert P \rvert-n+1.
    \end{split}
\end{equation}


By substitution of Inequality \ref{eq8} into Equation \ref{eq2},

\begin{equation}
    \label{eq9}
    \begin{split}
        k &\geq 2n - \lvert P \rvert + (\lvert P \rvert-n+1) \\
        &= n+1
    \end{split}
\end{equation}
\end{proof}

\begin{theorem}
\label{upperbound135}
A peaceful construction $C_n$ can have at most $3n - 2$ points for $\theta = 135 \degree$. 
\end{theorem}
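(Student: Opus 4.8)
The plan is to derive the theorem almost immediately from Lemma \ref{klemma} and Lemma \ref{strategy}, after first reducing to the case of a construction with at least $2n$ points. The key preliminary observation is that every subset of a peaceful construction is again peaceful: a $135\degree$ angle among three points of the subset would already be a $135\degree$ angle in the whole construction. So if $C_n$ is an arbitrary peaceful construction with $\lvert C_n \rvert \le 2n$, then $\lvert C_n \rvert \le 2n \le 3n-2$ whenever $n \ge 2$ (and the case $n = 1$ is trivial, since $3n - 2 = n^2 = 1$ there), and there is nothing more to prove. Thus I would concentrate entirely on the case $\lvert C_n \rvert \ge 2n$.

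In that case I would extract a peaceful sub-construction $C_n'$ consisting of exactly $2n$ of the points of $C_n$, which is peaceful by the observation above. Let $k$ be the number of slope buckets occupied by $C_n'$. The points of $C_n \setminus C_n'$ are then additional points adjoined to the peaceful $2n$-point construction $C_n'$ while keeping the union peaceful, so Lemma \ref{strategy} applies and yields $\lvert C_n \setminus C_n' \rvert \le 2n - 1 - k$, while Lemma \ref{klemma} gives $k \ge n+1$. Combining these,
\begin{equation*}
\lvert C_n \rvert = 2n + \lvert C_n \setminus C_n' \rvert \le 2n + (2n - 1 - k) = 4n - 1 - k \le 4n - 1 - (n+1) = 3n - 2 .
\end{equation*}

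Because the two lemmas do essentially all of the work, I do not expect a genuine obstacle; the one point that requires care is the reduction step itself. I must verify that Lemma \ref{strategy} really does bound $\lvert C_n \setminus C_n' \rvert$: the lemma asserts that at most $2n-1-k$ points can ever be added to a fixed peaceful $2n$-point construction without forming a $135\degree$ angle, and since $C_n$ is peaceful and contains $C_n'$, the set $C_n \setminus C_n'$ is exactly such a collection of added points, so the bound transfers directly. It is also worth confirming that the same quantity $k$ appears in both lemmas (it does, as both refer to the slope buckets occupied by the single construction $C_n'$), that $2n - 1 - k \ge 0$ is consistent since $G_n$ has only $2n-1$ slope buckets, and that the inequality $k \ge n+1$ is precisely what drives $2n - 1 - k$ down to at most $n - 2$.
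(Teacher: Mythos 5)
Your proposal is correct and takes essentially the same route as the paper: both derive the bound by combining Lemma \ref{strategy} and Lemma \ref{klemma} to get $4n-1-k \leq 3n-2$. In fact, your explicit reduction step --- handling $\lvert C_n \rvert \leq 2n$ trivially and extracting a $2n$-point peaceful sub-construction otherwise --- is slightly more careful than the paper's own proof, which starts from a $2n$-point construction without spelling out why this suffices for arbitrary peaceful $C_n$.
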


\begin{proof}
Let $C_n$ be a peaceful construction containing $2n$ points. By Lemma \ref{strategy}, we can add at most $2n-1-k$ additional points to $C_n$ for a total of $2n+2n-1-k=4n-1-k$ points. By Lemma \ref{klemma}, $k \geq n+1.$ Thus $C_n$ can contain at most $4n-1-(n+1)=3n-2$ points.
\end{proof}

\section{Upper Bound for all $\theta$}\label{secGeneralUpperBound}
We now generalize Theorem \ref{upperbound135} to prove an upper bound for all interesting angles $\theta$ such that $0 \degree<\theta<180 \degree.$ If $p, q < n$, let $f(p,q)$ be the number of $\poq$ slope buckets on $G_n$. 

\begin{lemma} We have the equality $f(p,q) = pn + qn - pq.$ \end{lemma}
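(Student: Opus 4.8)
The plan is to count the slope buckets by a double-counting argument that relates the number of buckets to the number of grid points and to the number of primitive lattice steps along the slope direction. Throughout I assume $p,q$ are positive and coprime, so that $\poq$ is in lowest terms, with $p,q<n$; the case of a negative slope follows by reflecting the grid, which preserves bucket counts, and coprimality is what makes the bare fraction $\poq$ correspond to a single well-defined family of lines.

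First I would observe that every point of $G_n$ lies on exactly one line of slope $\poq$, so $f(p,q)$ is precisely the number of distinct slope-$\poq$ lines meeting $G_n$. For a bucket containing $\ell$ grid points, list its points in increasing $x$-order; because $\gcd(p,q)=1$, the primitive lattice vector in that direction is $(q,p)$, so consecutive grid points on the line differ by exactly $(q,p)$, and there are exactly $\ell-1$ such consecutive pairs. I would call an ordered pair $\big((x,y),(x+q,y+p)\big)$ of grid points a \emph{step}.

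Summing $\ell-1$ over all $f(p,q)$ buckets gives, on one hand, the total number of steps, and on the other hand $\left(\sum_i \ell_i\right)-f(p,q)=n^2-f(p,q)$, since the point-counts of the buckets partition the $n^2$ points of $G_n$. I would then count steps directly: a point $(x,y)$ begins a step exactly when $(x+q,y+p)\in G_n$, i.e.\ $1\le x\le n-q$ and $1\le y\le n-p$, giving $(n-q)(n-p)$ steps. Equating the two counts yields $n^2-f(p,q)=(n-q)(n-p)$, hence $f(p,q)=n^2-(n-p)(n-q)=pn+qn-pq$.

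The main thing to get right is the bookkeeping that makes the identity \emph{lines $=$ points $-$ steps} exact: I must verify that the minimal gap between consecutive collinear grid points is the primitive vector $(q,p)$, which is where coprimality of $p$ and $q$ is essential, and that every step lies inside a unique bucket and is counted once, so no double counting occurs. A useful sanity check is the $-1$-slope case: taking $p=q=1$ gives $f=2n-1$, matching the $2n-1$ anti-diagonals noted for $G_4$ earlier in the section.
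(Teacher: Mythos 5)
Your proof is correct, but it takes a genuinely different route from the paper. The paper argues by exhibiting an explicit system of bucket representatives: it takes $\mathcal{S}$ to be the set of points in the first $p$ rows or first $q$ columns (so $\lvert \mathcal{S}\rvert = pn+qn-pq$), uses the division algorithm to show every point of $G_n$ shares a bucket with some point of $\mathcal{S}$, and notes that no two points of $\mathcal{S}$ share a bucket. You instead double count: since $\gcd(p,q)=1$ forces consecutive grid points on a slope-$\frac{p}{q}$ line to differ by the primitive vector $(q,p)$, a bucket with $\ell$ points contains exactly $\ell-1$ ``steps,'' so the total number of steps is $n^2 - f(p,q)$; counting steps directly by their starting points gives $(n-p)(n-q)$, and equating yields the formula. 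Your argument buys a cleaner, case-free computation (the paper's proof splits on $t\leq s$ versus $t>s$ and must separately verify distinctness of representatives, which it in fact only asserts), and it makes explicit exactly where coprimality and positivity of $p,q$ enter --- hypotheses the paper's statement omits but its proof also silently requires (e.g.\ with $p=q=2$ the formula fails). What the paper's approach buys in exchange is a concrete transversal of the buckets, i.e.\ a canonical representative point for each slope bucket, which is potentially reusable for indexing buckets in later arguments, whereas your count is purely existential. One small point to tighten in your write-up: when you claim each bucket's points form a run $P_0 + k(q,p)$ with $k$ ranging over \emph{consecutive} integers, note that the constraints $1 \leq x_0+kq \leq n$ and $1 \leq y_0 + kp \leq n$ each confine $k$ to an interval, and an intersection of intervals is an interval --- that is the one-line justification that ``steps within a bucket'' number exactly $\ell - 1$.
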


\begin{proof}
    Let $n, p, q \in \mathbb{N}$ such that $p, q < n$. Let $\mathcal{S}$ be the set of points that are either in the first $p$ rows or the first $q$ columns of $G_n$. We now show that every point in $G_n$ is in the same slope bucket as one point in $\mathcal{S}.$ Let $B = (B_x, B_y) \in G_n$ be arbitrary. By the division algorithm, \[B_x = qs + B_x\text{ mod }q \andd B_y = pt + B_y\text{ mod }p\] for some $s \andd t \in \mathbb{N} \cup \{0\}$. Let

    $$ A = (A_x, A_y) =
    \begin{cases}
        (B_x-qt \text{, } B_y\text{ mod } p) & \text{if } t \leq s\\
        (B_x\text{ mod }q \text{, }B_y-ps) & \text{if } t>s.
    \end{cases}
    $$ It suffices to prove that the slope between $A$ and $B$ is $\poq$ and that $A_x \leq q$ or $A_y \leq p.$ Let $z_{A,B}$ be the slope between $A$ and $B$. If $t \leq s,$ $z_{A,B}$ is given by

    \begin{align}
        z_{A,B}&=\frac{B_y - A_y}{B_x-A_x} \nonumber\\
        &=\frac{(pt+ B_y \text{ mod } p)-(B_y \text{ mod } p)}{(qs+B_x \text{ mod } q)-(B_x-qt)}. \nonumber\\
        &\mbox{Substituting $B_x-qs$ for $B_x$ mod $q,$} \nonumber\\
        &=\frac{(pt+ B_y \text{ mod } p)-(B_y \text{ mod } p)}{(qs+B_x-qs)-(B_x-qt)}
        =\frac{pt}{qt}
        =\frac{p}{q}. \nonumber\\
        \nonumber
    \end{align} If $t>s,$ $z_{A,B}$ is given by

    \begin{align}
    z_{A,B}&=\frac{B_y-A_y}{B_x-A_x} \nonumber\\
    &=\frac{(pt+B_y \text{ mod } p)-(B_y-ps)}{(qs+B_x \text{ mod } q)-(B_x \text{ mod } q)}. \nonumber\\
    &\mbox{Substituting $B_y-pt$ for $B_y$ mod $p,$} \nonumber\\
        &=\frac{(pt+B_y-pt)-(B_y-ps)}{qs+B_x \text{ mod } q - (B_x \text{ mod } q)}
        =\frac{ps}{qs}
        =\frac{p}{q}. \nonumber\\
        \nonumber
    \end{align}

    In either case, $B$ falls into the $\poq$ slope bucket of $A.$ When $t \leq s,$ $A_y = B_y \text{ mod } p$ so $B_y <p.$ Thus, $A$ is in the first $p$ rows of the grid. When $t>s,$ $A_x=B_x \text{ mod }q$ so $A_x<q.$ Thus $A$ is in the first $q$ columns of the grid. Since $\lvert \mathcal{S} \rvert =pn +qn - pq$ and no points in $\mathcal{S}$ are in the same slope bucket, $f(p, q) = pn + qn - pq.$
\end{proof}

\begin{theorem}
    A construction $C_n$ that is peaceful for $\theta=\arctan(\frac{p}{q})$ has at most $2n + f(p,q) - 2(\max\left\{\lvert P \rvert , \lvert q \rvert \right\})$ points.
\end{theorem}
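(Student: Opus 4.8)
The plan is to mirror the proof of Theorem~\ref{upperbound135}, replacing the slope $-1$ by the general slope $\frac{p}{q}$ and, crucially, replacing the role played by \emph{columns} with that of \emph{rows}. The reason for the switch is that the $135\degree$ argument secretly used a symmetry special to slope $\pm 1$: for that slope both the horizontal--bucket angle and the vertical--bucket angle equal $135\degree$, so rows, columns, and buckets interact pairwise. For a general $\theta=\arctan(\frac pq)$ only the horizontal--bucket pairing yields $\theta$ (the vertical--bucket pairing yields a different angle), so I would build the argument out of rows and $\frac pq$-slope buckets alone. First I would re-prove the analogues of Lemmas~\ref{same row}, \ref{same bucket}, \ref{outside columns}, \ref{interiorBucket}, and \ref{interiorRow} by the same $\arccos$-of-dot-product computation, now with the primitive direction vector $(q,p)$ in place of $(-1,1)$: two points sharing a row, together with a third point in the same $\frac pq$-bucket placed on the correct side, subtend exactly $\theta$ at the shared vertex. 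Negative slopes (equivalently, obtuse $\theta$) are handled by reflecting the grid, so I may take $p,q>0$. The crucial consequence to extract is (i): a point interior in its row with a bucket-buddy forms a $\theta$ angle, hence interior-in-row points must be alone in their bucket; the outside-rows analogue (each extreme row shares its bucket in at most one point) follows the same way.

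Next I would generalize the strategy Lemma~\ref{strategy}. Take a peaceful $C_n$ and a peaceful $2n$-point sub-construction occupying $k$ of the $f(p,q)$ buckets. By the pigeonhole principle on the $n$ rows, every point beyond the $2n$ forces some row to hold three chosen points, hence produces a point interior in its row; by consequence (i) that point must sit alone in a hitherto-empty bucket. As only $f(p,q)-k$ buckets are free, at most that many points can be added, giving $|C_n|\le 2n+f(p,q)-k$.

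It remains to bound $k$ from below, and here the generalization genuinely departs from the $135\degree$ case. The $135\degree$ argument proved $k\ge n+1$ through the intricate count with $\mathcal B$, $P$, and $M$; a direct transcription of that count appears to still yield $k\ge n+1$ for general slope, but this estimate degenerates into a vacuous bound once $\max\{|p|,|q|\}$ is large. For the uniform statement I would instead bound $k$ by a capacity argument: a line of slope $\frac pq$ meets $G_n$ in at most $\big\lfloor (n-1)/\max\{|p|,|q|\}\big\rfloor+1$ lattice points, so distributing $2n$ points among the buckets forces $k\ge 2n\big/\big(\lfloor (n-1)/\max\{|p|,|q|\}\rfloor+1\big)\ge 2\max\{|p|,|q|\}$. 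Substituting into $|C_n|\le 2n+f(p,q)-k$ then yields $2n+f(p,q)-2\max\{|p|,|q|\}$.

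I expect the capacity step to be the main obstacle. The geometric lemmas are routine once the direction vector is changed, and the strategy lemma transfers almost verbatim; but extracting the clean constant $2\max\{|p|,|q|\}$ from the floor expression $\big\lfloor (n-1)/\max\{|p|,|q|\}\big\rfloor+1$ is delicate. The inequality $2n\big/\big(\lfloor (n-1)/\max\rfloor+1\big)\ge 2\max\{|p|,|q|\}$ holds comfortably when $n$ is large relative to the slope but can fail for small $n$, so the honest version of the proof will either exploit the standing hypothesis $p,q<n$ with $n$ large or treat the small-$n$ regime separately. Reconciling the two lower bounds on $k$ — the symmetry-based $n+1$ for small slopes and the capacity-based $2\max\{|p|,|q|\}$ for large slopes — so that the single stated bound holds in every case is the point where I would need to be most careful.
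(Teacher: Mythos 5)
Your proposal follows the same route as the paper's own proof: pigeonhole to produce a point that is interior with respect to a grid direction, argue that such a point must be alone in its slope bucket, and bound from below, by a capacity estimate, the number $k$ of buckets consumed by $2n$ points, giving $\lvert C_n \rvert \le 2n + f(p,q) - k \le 2n + f(p,q) - 2\max\{\lvert p \rvert, \lvert q \rvert\}$. What is worth recording is that both places where you deviate from the paper are places where the paper, as written, is in error. First, the paper pigeonholes on columns and asserts that a point interior in its column forms an angle of $\theta$ with any bucket-mate; but for a $\poq$-slope bucket the angle between a vertical segment and a segment in direction $(q,p)$ is $\arctan(q/p)$ or $180\degree - \arctan(q/p)$, which equals $\theta$ only when $\lvert p \rvert = \lvert q \rvert$, i.e., only for $\theta = 45\degree$ or $135\degree$. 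Your switch to rows (or, equivalently, keeping columns but pairing them with $\frac{q}{p}$-slope buckets, which leaves the final formula unchanged since $f$ and the max are symmetric in $p$ and $q$) is the correct repair, and handling negative slopes by reflection is fine.

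Second, the capacity step. The paper claims a $\poq$-slope bucket holds at most $n/\max\{\lvert p \rvert, \lvert q \rvert\}$ grid points and divides to conclude $k \ge 2\max\{\lvert p \rvert, \lvert q \rvert\}$. That claim is false: in $G_5$ the slope-$\frac12$ line through $(1,1)$, $(3,2)$, $(5,3)$ holds three points, exceeding $5/2$. The true capacity is the one you use, $\lfloor (n-1)/\max\{\lvert p \rvert, \lvert q \rvert\}\rfloor + 1$, and, exactly as you suspect, division then recovers the clean constant only when $n$ is favorable: writing $M = \max\{\lvert p \rvert, \lvert q \rvert\}$, integrality of $k$ gives $k \ge 2M$ once $n > (2M-1)(M-1)$, but for example $n = 7$ with slope $\frac13$ gives only $k \ge \lceil 14/3 \rceil = 5 < 6$, and fourteen points genuinely do fit into five such buckets. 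So the obstacle you flag is not a defect of your write-up; it is a genuine gap in the published proof, which your version exposes rather than hides. To close it one must either restrict to $n$ large relative to $M$, or prove a peacefulness-sensitive lower bound on $k$ in the spirit of Lemma \ref{klemma} (pure counting cannot do it, since the five-bucket packing above ignores peacefulness entirely). As it stands, for small $n$ the stated bound is established neither by your argument nor by the paper's.
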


\begin{proof}
    Let $G_n$ be an arbitrary grid and $\theta$ an arbitrary interesting angle. By Theorem \ref{interestingIFFrational}, $\tan(\theta)=\poq$ for some integers $p$ and $q.$ Suppose that $C_n$ has $2n$ points and is peaceful for $\theta.$ The number of grid points in a $\poq-$slope-bucket is at most $\frac{n}{\text{max}\{ \lvert P \rvert ,\lvert q \rvert \}} .$ Then the $2n$ points must occupy at least $\frac{2n}{\left( \frac{n}{\text{max}\{\lvert p \rvert , \lvert q \rvert \}}\right)}=2 (\text{max}\{\lvert P \rvert , \lvert q \rvert \}).$
    
    By the pigeonhole principle, for every additional point chosen on the grid, there exists some  point which is an interior point in its column. A point which is an interior point in its column will be part of an angle of $\theta$ if there are any other points chosen in its slope bucket. Thus any additional points added to $C_n$ must be the only chosen points in their slope buckets for $C_n$ to remain peaceful for $\theta.$
    
    Since there are $f(p,q)$ slope buckets on the grid and at least $2 (\text{max}\{\lvert P \rvert , \lvert q \rvert \})$ have been used to place the first $2n$ points, at most $f(p,q)-2 (\text{max}\{\lvert P \rvert , \lvert q \rvert \})$ more points can be placed on the grid without forming an angle of $\theta.$ Thus at most $2n+f(p,q)-2 (\text{max}\{\lvert P \rvert , \lvert q \rvert \})$ points can be placed on the grid without forming an angle of $\theta=\tan^{-1}(\poq).$
\end{proof}

\section{Conclusion}\label{secConclusion}

When proving the upper bound for $135 \degree$ angles, we considered only angles that had one side in a row or column, which we called \emph{grid fitted}. However, other, \emph{sneaky} $135 \degree$ angles can be formed by grid points. An example is shown in Figure \ref{fig: sneaky}. The prevalence of sneaky angles makes finding configurations peaceful for $135 \degree$ a difficult task. For this reason, we believe the upper bound is not tight and more work remains to be done on this problem and many related problems. We end with a list of further questions about the combinatorial properties of objects on a grid.

\begin{figure}[h]
\centering
\includegraphics[width=4.5cm]{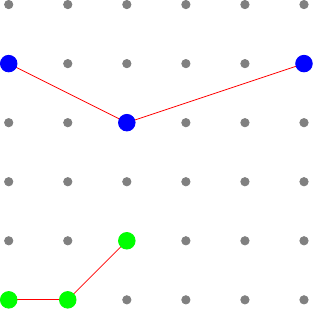} 
\caption{A grid fitted and a sneaky $135 \degree$ angle on $G_6.$}
\label{fig: sneaky}
\end{figure}

\begin{itemize}
    \item What other $135 \degree$ free constructions exist and what do they have in common?
    \item How does the number of possible sneaky angles relate to grid size?
    \item How may considering the specific geometry of angles improve their upper and lower bounds?
    \item What other shapes are interesting to consider? How many points can be placed on a grid such that no four make a square of any size (and rotation)?
    
\end{itemize}

\section{Statements and Declarations}
The work of each of the five authors was supported by National Science Foundation Grant 1852171.
The authors have no relevant financial or non-financial interests to disclose.

\bibliographystyle{amsplain}
\bibliography{no3bibliography}

\end{document}